\renewcommand{\d}{\hat{\delta}}
\newtheorem{theorem}{Theorem}
\newtheorem{claim}{Claim}
\newtheorem{case}{Case}
\renewcommand{\S}{\mathcal{S}}
\title{Large rainbow matchings in large graphs}
\author[A.~Kostochka]{Alexandr  Kostochka}
\address[A.~Kostochka]{University of Illinois at Urbana--Champaign\\ Urbana, IL 61801\\ USA\\ and
 Sobolev Institute of Mathematics\\ Novosibirsk 630090\\ Russia} 
 \email{kostochk@math.uiuc.edu}
\thanks{The research of the first author
is supported in part by NSF grant DMS-0965587 and by
the Ministry of education and science of the Russian Federation (Contract no. 14.740.11.0868). }
\author[F.~Pfender]{Florian Pfender}
\address[F.~Pfender]{Universit\"at Rostock\\ 18055 Rostock\\ Germany\\ and University of Colorado Denver\\ CO 80202\\ USA} \email{florian.pfender@uni-rostock.de}
\author[M.~Yancey]{Matthew Yancey}
\address[M.~Yancey]{Department of Mathematics\\ University of Illinois\\ Urbana,
IL 61801\\ USA} 
\email{yancey1@illinois.edu}
\thanks{The research of the third author is partially supported by the
Arnold O. Beckman Research Award of the University of Illinois
at Urbana-Champaign.}
\subjclass[2000]{05C15, 05C55}
\keywords{edge coloring, rainbow matching, anti-Ramsey, polychromatic, heterochromatic, totally multicolored}
\begin{document}

\begin{abstract}
A \textit{rainbow subgraph} of an edge-colored graph is a subgraph whose edges have distinct colors.
The \textit{color degree} of a vertex $v$ is the number of different colors on edges incident to $v$.
We  show that if $n$ is large enough (namely, $n\geq 4.25k^2$), then each $n$-vertex graph $G$
with minimum color degree at least $k$ contains a rainbow
matching of size at least $k$. \\
\end{abstract}

\maketitle
\section{Introduction}

We consider edge-colored  {\em simple} graphs.
A subgraph $H$ of such graph $G$ is {\em monochromatic} if every edge of $H$ is colored with the same color,
 and {\em rainbow} if no two edges of $H$ have the same color.
In the literature, a rainbow subgraph is also called totally multicolored, polychromatic, and heterochromatic.

In anti-Ramsey theory, for given $n$ and a graph $H$, the objective is to find the largest integer $k$
such that there is a coloring of $K_n$ using exactly $k$ colors that contains no rainbow copy of $H$.
The anti-Ramsey numbers and their relation to the Tur\' an numbers were first discussed by Erd\H os, Simonovits, and S\' os \cite{antiRamsey}.
Solutions to the anti-Ramsey problem are known for trees \cite{JiangWest}, matchings \cite{FujitaKanekoSchiermeyerSuzzuki}, and complete graphs \cite{Schiermeyer}, \cite{CompleteAntiRamsey} (see~\cite{antiRamseySurvey} for a more complete survey).
 R\" odl and Tuza proved there exist graphs $G$ with arbitrarily large girth
such that every proper edge coloring of $G$ contains a rainbow cycle \cite{R�dlTuza}.
 Erd\H os and Tuza asked  for which graphs $G$  there is a $d$ such that there is a rainbow copy of $G$ in any edge-coloring
of $K_n$ with exactly $|E(G)|$ colors such that
 for every vertex $v \in V(K_n)$ and every color $\alpha$, $v$ is the center of a monochromatic star with $d$ edges and color $\alpha$.
They found positive results for trees, forests, $C_4$, and $K_3$ and
found negative results for several infinite families of graphs~\cite{ErdosTuza}.

For $v \in V(G)$ and a coloring $\phi$ on $E(G)$, $\hat{d} (v)$ is the number of distinct colors on the edges incident to $v$.
This is called the \emph{color degree of $v$}.
The smallest color degree of all vertices in $G$ is the \emph{minimum color degree of $G$}, or $\hat{\delta} (G, \phi)$.
The largest color degree is $\hat{\Delta} (G, \phi)$.


Local anti-Ramsey theory seeks to find the maximum $k$ such that there exists a
coloring $\phi$ of $K_n$ that contains no rainbow copy of $H$ and $\hat{\delta} (K_n, \phi) \geq k$.

The topic of rainbow matchings has been well studied, along with a more general topic
of rainbow subgraphs (see~\cite{KanoLi} for a survey).
Let $r(G, \phi)$ be the size of a largest rainbow matching in a graph $G$ with edge coloring $\phi$.
In 2008, Wang and Li \cite{WangLi} showed that $r(G, \phi) \geq \left\lceil \frac{5  \hat{\delta} (G, \phi) - 3}{12}\right\rceil$
for every graph $G$ and conjectured that if $\hat{\delta} (G, \phi)\geq k \geq 4$ then $r(G, \phi) \geq \left\lceil \frac{ k }{2}\right\rceil$.
The conjecture is known to be tight for properly colored complete graphs.
LeSaulnier {\em et al.}~\cite{Illinois2010} proved that
$r(G, \phi) \geq \left\lfloor \frac{ k }{2}\right\rfloor$ for general graphs, and gave several conditions
sufficient for a rainbow matching of size $\left\lceil \frac{ k }{2}\right\rceil$.
In~\cite{KY2011}, the conjecture was proved in full. The only known extremal examples for the bound
have at most $k+2$ vertices.

Wang~\cite{Wang2011} proved that every properly edge-colored graph $(G,\phi)$ with $\delta(G, \phi)=k$ and $|V(G)|\geq 1.6k$
has a rainbow matching of size at least $3k/5$ and that every such triangle-free graph
has a rainbow matching of size at least $\left\lfloor 2k/3 \right\rfloor$. He also
asked if there is a function, $f(k)$, such that for every graph $G$
and proper edge coloring  $\phi$ of $G$ with $\hat{\delta}(G, \phi) \geq k$ and $|V(G)| \geq f(k)$,
 we have $r(G, \phi) \geq k$.
The bound on $r(G, \phi)$ is sharp for any properly $k$-edge-colored $k$-regular graph.

Diemunsch et al.~\cite{Denver} answered the question in the positive and
proved that $f(k) \leq 6.5 k$. 
Shortly thereafter, Lo~\cite{Lo}  improved the bound to $f(k) \leq 4.5 k$, and finally Diemunsch et al.~\cite{Denver2} combined the two manuscripts and improved the bound to $f(k) \leq \frac{98}{23} k$.
The largest matching in a graph with $n$ vertices contains at most $n/2$ edges, so $f(k) \geq 2k$.
By considering the relationship of Latin squares to edge-colored $K_{n,n}$, the lower bound can be improved to $f(k) \geq 2k + 1$ for even $k$.
This is the best known lower bound on the number of vertices required for both the properly edge-colored and general cases.

In this note we prove
an analogous result for arbitrary edge colorings of graphs.

\begin{theorem}\label{thm}
Let $G$ be an $n$-vertex graph and $\phi$ be an edge-coloring of $G$ with $n > 4.25 \d^2 (G, \phi)$.
Then $(G, \phi)$ contains a rainbow matching with at least $\d(G, \phi)$ edges.
\end{theorem}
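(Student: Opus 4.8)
The plan is to assume the contrary and reach a contradiction by augmentation and counting. Suppose $(G,\phi)$ has no rainbow matching of size $k:=\d(G,\phi)$; fix a \emph{maximum} rainbow matching $M=\{x_1y_1,\dots,x_my_m\}$, so $m<k$, write $\phi(x_iy_i)=c_i$, $C_M:=\{c_1,\dots,c_m\}$, and $U:=V(G)\setminus V(M)$, so $|U|=n-2m$. If convenient, among maximum rainbow matchings choose $M$ to also minimize the number of edges of $G$ whose colour lies in $C_M$, to gain extra rigidity. Maximality yields the usual facts: no edge of $G[U]$ has a colour outside $C_M$; consequently, for each $v\in U$ at most $|C_M|=m$ colours at $v$ lie in $C_M$, so since $\d(v)\ge k$ at least $k-m$ colours not in $C_M$ occur on edges at $v$, and these edges must run to $V(M)$ and reach distinct vertices of $V(M)$. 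Call such edges and colours \emph{fresh} at $v$; let $\mathcal F$ be the set of all fresh edges between $U$ and $V(M)$, so $|\mathcal F|\ge(n-2m)(k-m)$. (By $r(G,\phi)\ge\lceil k/2\rceil$ one may assume $\lceil k/2\rceil\le m\le k-1$, though the argument will not really use the lower bound; the critical case is $m=k-1$.)

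The mechanism is a short list of forbidden augmentations. Basic \emph{swap}: for no $i$ are there distinct $v,w\in U$ and distinct fresh colours $\alpha\ne\beta$ with $\phi(vx_i)=\alpha$ and $\phi(wy_i)=\beta$, since otherwise $(M\setminus\{x_iy_i\})\cup\{vx_i,wy_i\}$ is a rainbow matching of size $m+1$. \emph{Rotate-then-extend}: replacing $x_iy_i$ by a fresh edge $vx_i$ frees $y_i$, so any edge of colour $c_i$ in $G[U]-v$, or any edge from $y_i$ into $U-v$ whose colour is $c_i$ or a fresh colour $\ne\phi(vx_i)$, would complete a rainbow matching of size $m+1$. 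Using the swap I would first show that if a vertex $x_i$ sends many fresh edges into $U$, then almost all of them, and (with a little more) almost all fresh edges from $U$ to $y_i$ too, carry one common colour $\gamma_i$. Rotate-then-extend then pins down more structure around such a \emph{heavy} edge $e_i$: the colour $c_i$ can occur only on $e_i$ itself and on edges of $G$ meeting $\{x_i,y_i\}$ with the other end in $V(M)$ (so on $O(m)$ edges), and each of $x_i,y_i$ must be fresh-rich towards $V(M)$ in the manner of a vertex of $U$.

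The contradiction is a double count. Let $g(u)$ denote the number of fresh edges from $u\in V(M)$ to $U$; then $\sum_{u\in V(M)}g(u)=|\mathcal F|\ge(n-2m)(k-m)$, so some $u$ has $g(u)\ge\frac{(n-2m)(k-m)}{2m}$, which a short computation shows exceeds $\tfrac{17}{8}k$ once $n>4.25k^2$ (the estimate being tightest at $m=k-1$). So it suffices to prove
\[
g(u)\le\tfrac{17}{8}k\qquad\text{for every }u\in V(M),
\]
and this is where the real work, and the main obstacle, lies. Suppose $x_i$ had $g(x_i)>\tfrac{17}{8}k$; by the above $e_i$ would be heavy, with the rigid structure just described. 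The difficulty is that no counting over the $m$ matching edges can by itself rule out one vertex absorbing unboundedly many fresh edges of its single colour $\gamma_i$. To finish one exploits that $U$ is enormous: if more than $\tfrac{17}{8}k$ vertices of $U$ join $x_i$ by colour $\gamma_i$, then (when $m\le k-2$) each of them has a further fresh edge into the $2m-1$ vertices of $V(M)\setminus\{x_i\}$, so two of them aim fresh edges at a common vertex $u^\ast$; combining this with the heavy rigidity at $e_i$ (essentially no $c_i$; $x_i,y_i$ fresh-rich) and with the swap and rotation structure at the matching edge through $u^\ast$ produces a forbidden augmentation. The boundary case $m=k-1$ must be treated separately, via a double rotation that deletes two matching edges and inserts three. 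Feeding $g(u)\le\tfrac{17}{8}k$ back into the count pushes $n$ below $4.25k^2$, contradicting the hypothesis. The heavy case is the crux: one must coordinate two rotation arguments at once, absorb the ``$-v$''-type side conditions into the extremal choice of $M$, and track constants tightly enough to obtain the bound $\tfrac{17}{8}k$ — equivalently the threshold $4.25=\tfrac{17}{4}$ — rather than something weaker.
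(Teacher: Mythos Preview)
Your approach is genuinely different from the paper's and, as written, has a real gap at the step you yourself flag as the crux: the bound $g(u)\le\tfrac{17}{8}k$. Everything up to the double count is standard and correct, and the arithmetic showing that this bound would finish the proof is fine. But the argument offered for the bound does not go through. Suppose $x_i$ absorbs more than $\tfrac{17}{8}k$ fresh edges, all of colour $\gamma_i$, from a set $W\subseteq U$. Your pigeonhole gives $v,w\in W$ with fresh edges $vu^\ast$, $wu^\ast$ to a common $u^\ast\in V(M)\setminus\{x_i\}$. But both edges go to the \emph{same} endpoint of the matching edge through $u^\ast$, so the swap lemma does not apply; and a rotation at $u^\ast$ frees its partner, not $v$ or $w$, so the fact that $v,w$ are also tied to $x_i$ by colour $\gamma_i$ is not obviously usable. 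Nothing in the ``heavy rigidity'' you derived at $e_i$ (that $c_i$ occurs only near $V(M)$ and that $x_i,y_i$ are fresh-rich toward $V(M)$) forces the missing edge needed to complete an augmentation of size $m+1$. The $m=k-1$ case is worse, since then $k-m=1$ and a $\gamma_i$-neighbour of $x_i$ need not have any other fresh edge at all. In short, the single-colour blow-up at one vertex --- exactly the phenomenon that distinguishes arbitrary colourings from proper ones --- is not controlled by the augmentation toolkit you set up, and the constant $\tfrac{17}{8}$ has been read off the desired conclusion rather than produced by an argument.

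For contrast, the paper does not work with a maximum matching at all. It takes an edge-minimal counterexample, so every colour class is a star forest and every edge has an endpoint of colour degree exactly $k$; it then orients edges (away from star centres, and on singleton-colour edges so as to lexicographically maximise the outdegree sequence), splits $V(G)$ into $C$ (positive outdegree) and $L$, and runs three separate weighted greedy arguments depending on which of $|C|$, $|L|$, or a certain star-count is large. The orientation and the minimality together give uniform caps such as $d^-(v)\le k$ and $w(E[\alpha])\le O(k)$ that replace, and are much easier to obtain than, your vertexwise bound $g(u)\le\tfrac{17}{8}k$. The $4.25$ emerges as $1.75+2.5$ from the thresholds of Cases~2 and~3, not from a single sharp inequality.
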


Our result gives a significantly weaker bound on the order of $G$ than the bounds in~\cite{Denver2}
 but for a significantly wider class of edge-colorings.

Several ideas in the proof came from Diemunsch et al.'s paper~\cite{Denver}.
The full proof is presented in the next section.

\section{Proof of the Theorem}
Let $(G,\phi)$ be a counter-example to our theorem with the fewest edges in $G$.
For brevity, let $k := \hat{\delta}(G, \phi)$.
Since $(G,\phi)$ is a counter-example, $n:=|V(G)| > 4.25 k^2$.
The theorem is trivial for $k=1$, and it is easy to see that if $\hat{\delta}(G)=2$ and $(G, \phi)$ does not have a rainbow matching of size $2$, then $|V(G)| \leq 4$.
Therefore $k\ge 3$.

\begin{claim}\label{cl:starforest}
 Each color class in $(G,\phi)$ forms a star forest.
\end{claim}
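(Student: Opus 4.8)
The plan is a routine edge-minimality argument. Suppose for a contradiction that some color class is not a star forest; say the edges colored $\alpha$ induce a subgraph $M$ that is not a star forest. Then $M$ has a connected component that is not a star, and a connected graph that is not a star must contain either a cycle or a path on four vertices (indeed, a tree that is not a star has diameter at least $3$). So $M$ contains a cycle $C$, or else a path $v_1v_2v_3v_4$ all of whose edges are colored $\alpha$.

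In the first case let $e$ be any edge of $C$; in the second let $e=v_2v_3$ be the central edge of the path. In either case, after deleting $e$ each endpoint of $e$ is still incident with an edge of color $\alpha$ (another edge of $C$, respectively one of $v_1v_2$ and $v_3v_4$). Since $e$ is incident with no other vertex, $\hat{d}(x)$ is unchanged for every vertex $x$, so $\hat\delta(G-e,\phi)=\hat\delta(G,\phi)=k$, while $|V(G-e)|=n>4.25k^2=4.25\,\d^2(G-e,\phi)$. As $G-e$ has fewer edges than $G$, the minimality of our counter-example forces $(G-e,\phi)$ to satisfy the conclusion of Theorem~\ref{thm}, i.e. to contain a rainbow matching with at least $k$ edges. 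That matching lies in $G$ as well, contradicting the assumption that $(G,\phi)$ has no rainbow matching of size $k$.

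The only step that takes any thought is the choice of the edge $e$ to delete: it must be selected so that both of its endpoints retain an incident edge of color $\alpha$, which is precisely why the structural dichotomy ``contains a cycle or a $P_4$'' is the right description of a color class that fails to be a star forest. Everything else is immediate, since deleting a single edge cannot increase the minimum color degree and does not change $n$, so the hypothesis $n>4.25\,\d^2$ is preserved and edge-minimality applies.
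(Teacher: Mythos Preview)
Your proof is correct and follows essentially the same approach as the paper's: find an edge $e$ of color $\alpha$ both of whose endpoints retain an $\alpha$-edge after deletion, so that $\hat\delta(G-e,\phi)=k$ and edge-minimality yields a rainbow matching of size $k$ in $G-e\subseteq G$. The paper simply asserts the existence of such an edge $uv$ (noting that $x=y$ is allowed, which covers the triangle case), whereas you justify it more explicitly via the cycle/$P_4$ dichotomy; the content is the same.
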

\begin{proof} Suppose that the edges of color $\alpha$ do not form a star forest. Then there exists an edge
 $uv$ of color $\alpha$ such that an edge $ux$ and an edge $vy$ also are colored with $\alpha$ (possibly, $x=y$).
 Then the graph $G' = G-uv$ has fewer edges than $G$, but $\hat{\delta}(G', \phi)=k$.
By the minimality of $G$, $r(G', \phi) \geq k$.
But then $r(G, \phi) \geq k$, a contradiction.
\end{proof}

We will denote the set of maximal monochromatic stars of size at least $2$ by $\S$.
Let $E_0\subseteq E(G)$ be the set of edges not incident to another edge of the same color, i.e. the maximal monochromatic stars of size $1$.

\begin{claim}\label{cl:delta}
 For every edge $v_1v_2\in E(G)$, there is an $i\in\{1,2\}$, such that $\hat{d}(v_i)=k$ and $v_1v_2$ is the only edge of its color at $v_i$.
\end{claim}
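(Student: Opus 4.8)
The plan is to argue by minimality, exactly in the spirit of the proof of Claim~\ref{cl:starforest}: delete the edge $v_1v_2$ from $G$ and show that the resulting graph still has minimum color degree $k$; then it contains a rainbow matching of size $k$, which also lives in $G$, contradicting the choice of $(G,\phi)$.

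First I would write out what it means for the claim to fail at an edge $v_1v_2$. Negating the statement, we may assume that for each $i\in\{1,2\}$, \emph{either} $\hat d(v_i)>k$ (recall $k=\hat\delta(G,\phi)$ is the minimum color degree, so a color degree different from $k$ is at least $k+1$) \emph{or} some edge at $v_i$ other than $v_1v_2$ has color $\phi(v_1v_2)$.

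Next I would record the elementary effect of deleting a single edge on color degrees. Put $G':=G-v_1v_2$. Then $\hat d_{G'}(w)=\hat d_G(w)$ for every vertex $w\notin\{v_1,v_2\}$, while for $w\in\{v_1,v_2\}$ we have $\hat d_{G'}(w)=\hat d_G(w)$ if some edge at $w$ other than $v_1v_2$ still carries the color $\phi(v_1v_2)$, and $\hat d_{G'}(w)=\hat d_G(w)-1$ otherwise; in particular $\hat d_{G'}(w)\le \hat d_G(w)$ for all $w$. Combining this with the dichotomy from the previous paragraph yields $\hat d_{G'}(v_i)\ge k$ for $i=1,2$: in the first alternative $\hat d_{G'}(v_i)\ge \hat d_G(v_i)-1\ge k$, and in the second $\hat d_{G'}(v_i)=\hat d_G(v_i)\ge k$. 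Hence $\hat\delta(G',\phi)\ge k$, and since deleting an edge cannot raise any color degree, in fact $\hat\delta(G',\phi)=k$.

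Finally I would invoke minimality. Since $V(G')=V(G)$, we still have $|V(G')|=n>4.25k^2=4.25\,\hat\delta^2(G',\phi)$, so $(G',\phi)$ satisfies the hypothesis of the theorem; as $G'$ has fewer edges than $G$, it cannot be a counterexample, so $r(G',\phi)\ge k$. But a rainbow matching of $G'$ is a rainbow matching of $G$, so $r(G,\phi)\ge k$, a contradiction. The argument is essentially bookkeeping; the only points needing a little care are handling the negation of the two-part statement correctly and observing that $\hat\delta(G',\phi)$ remains exactly $k$ (not merely $\ge k$), so that the hypothesis $n>4.25\,\hat\delta^2$ is not lost when passing to $G'$.
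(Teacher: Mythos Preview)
Your proof is correct and is precisely the argument the paper has in mind; the paper's own proof is the single sentence ``Otherwise, we can delete the edge and consider the smaller graph,'' and you have simply unpacked that sentence carefully. Your attention to verifying $\hat\delta(G',\phi)=k$ exactly (not merely $\ge k$) is appropriate, since the minimality hypothesis requires $n>4.25\,\hat\delta(G',\phi)^2$.
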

\begin{proof}
 Otherwise, we can delete the edge and consider the smaller graph.
 \end{proof}

\begin{claim}\label{cl:delta2}
 All leaves $v\in V(G)$ of stars in $\S$ have $\hat{d}(v)=k$.
\end{claim}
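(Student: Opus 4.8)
The plan is to derive this as an immediate corollary of Claim~\ref{cl:delta}. Fix a leaf $v$ of some star $S\in\S$, let $u$ be the center of $S$, and let $\alpha:=\phi(uv)$ be the color of $S$. By definition, every star in $\S$ is a monochromatic star of size at least $2$, so $S$ has a second leaf $w\neq v$ with $\phi(uw)=\alpha$. In particular, $uv$ is \emph{not} the only edge of color $\alpha$ incident to $u$.

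Now I would apply Claim~\ref{cl:delta} to the edge $uv$. It yields an endpoint $v_i\in\{u,v\}$ with $\hat d(v_i)=k$ for which $uv$ is the unique edge of its color incident to $v_i$. By the observation above, $v_i$ cannot be $u$, so $v_i=v$, which gives $\hat d(v)=k$ exactly as claimed.

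There is essentially no obstacle here: the only points that need (trivial) checking are that a star in $\S$ genuinely has a second leaf — which is precisely why $\S$ is defined using stars of size at least $2$ — and that the edge $uv$ carries the color of the whole star, both of which are immediate from the definitions. So the claim is a one-line consequence of Claim~\ref{cl:delta}, and the same reasoning will be reused whenever we need color degree $k$ at non-central vertices of large monochromatic stars.
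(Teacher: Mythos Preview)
Your proposal is correct and follows precisely the same approach as the paper, which simply states that the claim is immediate from Claim~\ref{cl:delta}. Your write-up just makes explicit the one-line reasoning the paper leaves to the reader.
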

\begin{proof}
This follows immediately from Claim~\ref{cl:delta}.
\end{proof}

For the sake of exposition, we will now direct all edges of our graph $G$.
With an abuse of notation, we will still  call the resulting directed graph $G$.
In every star in $\S$, we will direct the edges away from the center.
All edges in $E_0$ will be directed in a way such that
the sequence of color outdegrees in $G$, $\hat{d_0}^+\ge \hat{d_1}^+\ge\ldots\ge \hat{d_n}^+$ is lexicographically maximized.
Note that by Claim \ref{cl:starforest},
\begin{equation*}\label{rainbow in-edges}
\mbox{the set of edges towards $v$ forms a rainbow star, and so $d^-(v) \leq \hat{d}(v)$.}\tag{\texttt{I}}
\end{equation*}

Let $C$ be the set of vertices with non-zero outdegree and  $L:=V\setminus C$.
Let $\S^*\subseteq \S$ be the set of maximal monochromatic stars with at least two vertices in $L$, and let
$E_0^*\subseteq E_0\cup \S$ be the set of maximal monochromatic stars with exactly one vertex in $L$. 
For a color $\alpha$, let $E_H[\alpha]$ be the set of edges colored $\alpha$ in a graph $H$.
If there is no confusion, we will denote it by $E[\alpha]$.

\begin{claim}\label{cl:indeg}
For every $v\in V(G)$ with $\hat{d}(v)\geq k+1$, $d^-(v)=0$. In particular,
 $d^-(v)\le k$ for every $v\in V(G)$.
Moreover, for all $w \in L$, $d(w) = k$.
\end{claim}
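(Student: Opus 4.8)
The plan is to derive the second and third assertions from the first, and to prove the first by a reorientation argument built on the lexicographic maximality of the color-outdegree sequence, together with Claims~\ref{cl:delta} and~\ref{cl:delta2} and property~(\texttt{I}).

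The two consequences are immediate. For ``$d^-(v)\le k$ for every $v$'': if $\hat{d}(v)\ge k+1$ the first assertion gives $d^-(v)=0\le k$, and otherwise property~(\texttt{I}) gives $d^-(v)\le\hat{d}(v)\le k$. For $w\in L$: here $d^+(w)=0$, so if $\hat{d}(w)\ge k+1$ the first assertion forces $d^-(w)=0$, whence $d(w)=0$ and $\hat{d}(w)=0<k$, impossible; thus $\hat{d}(w)=k$, and since every edge at $w$ is an in-edge and the in-edges at $w$ form a rainbow star by property~(\texttt{I}), we get $d(w)=d^-(w)=\hat{d}(w)=k$.

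For the first assertion I would argue by contradiction, assuming $\hat{d}(v)\ge k+1$ and $d^-(v)=d\ge 1$. Since $\hat{d}(v)>k$, Claim~\ref{cl:delta2} shows $v$ is not the leaf of any star in $\S$, so every in-edge at $v$ lies in $E_0$; being the unique edge of its color, each such edge contributes a color occurring nowhere else at $v$, so the colors on in-edges at $v$ are distinct from those on out-edges at $v$ and hence $\hat{d}^{+}(v)=\hat{d}(v)-d$. Applying Claim~\ref{cl:delta} to each in-edge $u_iv$ and using $\hat{d}(v)>k$ gives $\hat{d}(u_i)=k$, and in particular $\hat{d}^{+}(u_i)\le k$, for every in-neighbour $u_i$ of $v$.

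The heart of the proof is a reorientation step. Order the in-neighbours so that $\hat{d}^{+}(u_1)\le\cdots\le\hat{d}^{+}(u_d)$ and, for a fixed $j\in\{1,\dots,d\}$, reverse the $j$ edges $u_1v,\dots,u_jv$. Since these all lie in $E_0$, this keeps every star directed away from its center, and it changes color-outdegrees only at $v$ (raised by $j$) and at $u_1,\dots,u_j$ (each lowered by $1$). If $\hat{d}^{+}(u_j)\le\hat{d}^{+}(v)+j-1$, then every color-outdegree value that is removed or newly created, other than the single new value $\hat{d}^{+}(v)+j$, is strictly smaller than $\hat{d}^{+}(v)+j$; counting how many color-outdegrees are at least $\hat{d}^{+}(v)+j$ before and after then shows the new sequence is lexicographically larger, contradicting the choice of orientation. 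Hence $\hat{d}^{+}(u_j)\ge\hat{d}^{+}(v)+j$ for every $j$, and taking $j=d$ yields $\hat{d}^{+}(u_d)\ge\hat{d}^{+}(v)+d=\hat{d}(v)\ge k+1$, contradicting $\hat{d}^{+}(u_d)\le k$. Thus $d^-(v)=0$. I expect the only point requiring genuine care to be the lexicographic comparison of the two color-outdegree sequences (choosing to reverse $j$ in-edges of $v$ simultaneously, rather than one edge, is what makes it work); the remaining steps are routine.
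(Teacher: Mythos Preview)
Your proof is correct and follows the same core idea as the paper: show every in-edge at $v$ lies in $E_0$ (via Claim~\ref{cl:delta2}), observe that each in-neighbour has color degree $k$ (via Claim~\ref{cl:delta}), then reverse in-edges to contradict the lexicographic maximality of the color-outdegree sequence. The paper simply reverses \emph{all} in-edges at once, noting that $v$'s new color outdegree becomes $\hat{d}(v)\ge k+1$ while every value removed or decreased was at most $k$; your parametrized argument for general $j$ is correct but only the case $j=d$ is actually invoked, and that case is exactly the paper's argument. So the extra machinery is unnecessary, though harmless.
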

\begin{proof}
Suppose that $\hat{d}(v)\geq k+1$, and let $w_iv$ be the edges directed towards $v$.
By Claim~\ref{cl:delta} and (\ref{rainbow in-edges}), $\hat{d}(w_i) = k$ and $w_iv\in E_0$ for all $i$.
Then $d^+(w_i)\le\hat{d}(w_i)=k$.
Reversing all edges $w_iv$ would increase the color outdegree of $v$ with a final value
larger than $k$ while decreasing the color outdegree of each $w_i$, which was at most $k$.
Hence the sequence of color outdegrees would lexicographically increase, a contradiction
to the choice of the orientation of $G$.

By the definition of $L$, if $w \in L$, then $d^+(w)=0$. So in this case by
the previous paragraph,  $k \le \hat{d}(w) \leq d^-(w) \le k$, which proves the second statement.
\end{proof}


\begin{claim}\label{cl:class}
 No color class in $(G,\phi)$ has more than $2k-2$ components.
\end{claim}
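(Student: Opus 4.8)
The plan is to exploit the minimality of the counterexample by deleting \emph{all} edges of a single color at once, rather than a single edge. Fix a color $\alpha$; if its color class is empty or has at most $2k-2$ components there is nothing to prove, so assume it has at least $2k-1$ components. Let $G' := G - E[\alpha]$. Passing from $G$ to $G'$ removes at most one color (namely $\alpha$) from the palette at each vertex, so $\hat{d}_{G'}(v) \ge \hat{d}_{G}(v) - 1 \ge k-1$ for every $v$; in particular $\hat{\delta}(G',\phi) \ge k-1 \ge 2$ and no vertex becomes isolated. Also $G'$ has strictly fewer edges than $G$, and $n > 4.25 k^2 \ge 4.25\,\hat{\delta}(G',\phi)^2$. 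Hence $G'$ cannot be a counterexample to the theorem, and therefore $r(G',\phi) \ge \hat{\delta}(G',\phi) \ge k-1$.

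Next I would pick a rainbow matching $M$ in $G'$ with $|M| = k-1$; it avoids the color $\alpha$ and uses exactly $2k-2$ vertices. By Claim~\ref{cl:starforest} the color class of $\alpha$ is a star forest, so its components are pairwise vertex-disjoint stars; since $|V(M)| = 2k-2$, at most $2k-2$ of these components can meet $V(M)$. As there are at least $2k-1$ of them, some star $S$ of color $\alpha$ is disjoint from $V(M)$. Then $M$ together with any one edge of $S$ is a rainbow matching of size $k$ in $(G,\phi)$, contradicting that $(G,\phi)$ is a counterexample. This forces every color class to have at most $2k-2$ components.

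The only step with any content is the first one: a single edge deletion is useless here, since by Claim~\ref{cl:delta} removing any edge drops the minimum color degree below $k$, whereas removing a whole color class costs only one unit of minimum color degree --- a loss that the very generous bound $n > 4.25 k^2$ absorbs with room to spare, being far more than the $4.25(k-1)^2$ needed to apply the theorem to $G'$. After that, everything reduces to a one-line pigeonhole count on $V(M)$. I do not expect a genuine obstacle; the only thing to watch is that $\hat{\delta}(G',\phi)$ may equal either $k-1$ or $k$, and to confirm that $n > 4.25\,\hat{\delta}(G',\phi)^2$ holds in both cases so that the minimality argument legitimately applies to $G'$.
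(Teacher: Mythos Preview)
Your argument is correct and is exactly the paper's proof, just written out in full detail: delete the entire color class $\alpha$, apply minimality to get a rainbow $(k-1)$-matching in $G'=G-E[\alpha]$, and then pigeonhole among the $\ge 2k-1$ components of $\alpha$ to extend. The concern you flag at the end is harmless, since $\hat{\delta}(G',\phi)\le k$ automatically gives $n>4.25k^2\ge 4.25\,\hat{\delta}(G',\phi)^2$.
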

\begin{proof}
Otherwise, remove the edges of a color class $\alpha$ with at least $2k-1$ components,
and use induction to find a rainbow matching with $k-1$ edges in the remaining graph.
This matching can be incident to at most $2k-2$ of the components of $\alpha$, so there is at least one
component of $\alpha$ not incident to the matching, and we can pick any edge in this component to extend
the matching to a rainbow matching on $k$ edges.
\end{proof}

We consider three cases.
If $n > 4.25 k^2$, then at least one of the three cases will apply.
The first two cases will use greedy algorithms. 

\begin{case} $|\S^*|+\frac12|E_0^*|\ge 2.5k^2$.
\end{case}
For every $S\in \S^*$, assign a weight of $w_1(e)=1/|S\cap L|$ to each of the edges of $S$ incident to $L$.
Assign a weight of $w_1(e)=1/2$ to every edge $e\in E_0^*$.
Edges in $G[C]$ 
receive zero weight.
Let $G_0 \subset G$ be the subgraph of edges with positive weight.
For every set of edges $E'\subseteq E(G)$, let $w_1(E')$ be the sum of the weights of the edges in $E'$.
For every vertex, let $w_1(v)=\sum_{a\in N^+(v)}w_1(va) + \sum_{b\in N^-(v)}w_1(bv)$.
Note that $G_0$ is bipartite with partite sets $C$ and $L$ and that 
$w_1(e)\le 1/2$ for every edge $e \in E(G)$.
Furthermore,
\[
\frac12\sum_{v\in V(G)}w_1(v)=\sum_{e\in E(G)}w_1(e)=|\S^*|+\frac12|E_0^*|\ge 2.5k^2.
\]

 \begin{claim}\label{cl:Delta}
  For every $v\in V(G)$, $w_1(v) \le 2(k-1)$.
 \end{claim}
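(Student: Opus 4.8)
The plan is to split on where $v$ lives and how large $\hat d(v)$ is. If $\hat d(v)$ is not too large the bound is a direct count; the interesting case is a vertex $v\in C$ of very large color degree, which is the centre of many monochromatic stars meeting $L$, and there I will delete $v$, invoke the minimality of $G$ to get a rainbow matching of size $k-1$ in $G-v$, and extend it by a single edge at $v$.

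The two easy situations are as follows. If $v\in L$, then by Claim~\ref{cl:indeg} the vertex $v$ has exactly $k$ edges and all of them are oriented toward $v$; since $w_1(e)\le 1/2$ for every edge, $w_1(v)\le k/2\le 2(k-1)$. If $v\in C$, then the only edges at $v$ with positive weight are the out-edges from $v$ to $L$: in-edges at $v$ start in $C$ by the definition of $L$, and out-edges of $v$ ending in $C$ lie in $G[C]$ and so have weight $0$. Group the out-edges of $v$ ending in $L$ by color. The edges of a fixed color at $v$ form either one star of $\S^*$, which contributes exactly $1$ to $w_1(v)$ (such a star has $|S\cap L|$ edges meeting $L$, each of weight $1/|S\cap L|$), or one member of $E_0^*$, which contributes $1/2$; hence $w_1(v)$ is at most the number of colors on out-edges of $v$, which is at most $\hat d(v)$. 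So the claim holds whenever $\hat d(v)\le 2(k-1)$.

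It remains to treat $v\in C$ with $\hat d(v)\ge 2k-1$; suppose, for a contradiction, that $w_1(v)>2(k-1)$. Since $\hat d(v)\ge 2k-1\ge k+1$, Claim~\ref{cl:indeg} gives $d^-(v)=0$, and Claim~\ref{cl:delta} gives that every out-neighbour $a$ of $v$ has $\hat d(a)=k$ with $va$ the unique edge of its color at $a$; deleting $v$ therefore drops the color degree of each such $a$ by exactly one, so $\hat\delta(G-v)=k-1$. Moreover $n>4.25k^2\ge 4.25(k-1)^2+1$, so $|V(G-v)|=n-1>4.25(k-1)^2$, and since $G-v$ has fewer edges than $G$, the minimality of $G$ yields a rainbow matching $M$ in $G-v$ with $|M|=k-1$, using $2k-2$ vertices and $k-1$ colors. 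Let $D$ be the set of colors occurring on edges from $v$ to $L$; for $\alpha\in D$ write $n_\alpha$ for the number of such edges of color $\alpha$ and $c_\alpha\in\{1/2,1\}$ for the contribution of $\alpha$ to $w_1(v)$, so that $n_\alpha\ge 2c_\alpha$ and $|D|\ge w_1(v)>2k-2$. The $L$-neighbours of $v$ reached by colors outside $\phi(M)$ are pairwise distinct because $G$ is simple, and their number is at least $\sum_{\alpha\in D\setminus\phi(M)}n_\alpha\ge 2\bigl(w_1(v)-\sum_{\alpha\in D\cap\phi(M)}c_\alpha\bigr)\ge 2\bigl(w_1(v)-(k-1)\bigr)>2k-2=|V(M)|$. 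Hence some $L$-neighbour $a$ of $v$ with $\phi(va)\notin\phi(M)$ avoids $V(M)$, and then $M\cup\{va\}$ is a rainbow matching of size $k$ in $G$ (note $v\notin V(M)$ because $M\subseteq G-v$), contradicting the choice of $G$. Thus $w_1(v)\le 2(k-1)$.

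I expect the heart of the matter to be this third case: there is no purely structural bound on $w_1(v)$ once $v$ is the centre of arbitrarily many monochromatic stars touching $L$, so one is forced into the global delete-and-extend step. The two points to be careful about are verifying that $G-v$ still satisfies the hypotheses of the theorem — in particular that $\hat\delta(G-v)=k-1$, which is exactly where Claim~\ref{cl:delta} is used — and the bookkeeping that after deleting the $k-1$ colors and $2k-2$ vertices of $M$ there are still enough colored $v$–$L$ edges to extend $M$.
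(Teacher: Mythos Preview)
Your proof is correct and the heart of it---delete $v$, invoke minimality to get a rainbow matching $M$ of size $k-1$ in $G-v$, then extend $M$ by one edge at $v$---is exactly the paper's argument. The paper, however, dispenses with your case analysis entirely: it assumes $w_1(v)>2(k-1)$ for an \emph{arbitrary} $v$, notes that $\hat\delta(G-v)\ge k-1$ simply because $G$ is simple (no appeal to Claim~\ref{cl:delta} needed), and then argues that the colors of $M$ account for weight at most $k-1$ at $v$ (each color contributes $\le 1$), so the remaining colors carry weight $>k-1$ and hence, since every edge has $w_1(e)\le 1/2$, at least $2k-1$ edges at $v$ have colors outside $\phi(M)$; one of these avoids the $2k-2$ vertices of $M$. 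Your Cases~1 and~2 are therefore unnecessary scaffolding, though your more careful verification in Case~3 that $\hat\delta(G-v)=k-1$ exactly (via Claim~\ref{cl:delta}) does sidestep a small subtlety the paper glosses over when $\hat\delta(G-v)$ might still equal $k$.
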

 \begin{proof} Suppose  $(G, \phi)$ has a vertex  $v$ with $w_1(v)>2(k-1)$.
Let $G'=G-v$.
Then $\hat{\delta}(G', \phi) \geq k-1$ and $|V(G')|=n-1> 4.25 (k-1)^2$.
By the minimality of $(G, \phi)$, the colored graph $(G', \phi)$ has a rainbow matching $M$ of size $k - 1$.
At most $k-1$ of the stars $v$ is incident to have colors appearing in $M$; each of them contributes a weight of at most $1$ to $w_1(v)$.
As $w_1(v)> 2(k-1)$, there are at least $2k-1$ edges incident to $v$ with colors not appearing in $M$.
At least one of these edges is not incident to $M$.
Thus $(G, \phi)$ has  a rainbow matching of size $k$, a contradiction.
\end{proof}

We propose an algorithm that will find a rainbow matching of size at least $k$.
For $i=1,2,\ldots$, at Step $i$:
\begin{enumerate}\setcounter{enumi}{-1}
\item  If $G_{i-1}$ has no edges or $i-1=k$, then stop.
	\item If a vertex $v$ of maximum weight has $w_1(v)  > 2(k-i)$ in $G_{i-1}$, then set $G_i = G_{i-1} - v$ and
go to Step $i+1$.
	\item If the largest color class $E[\alpha]$ of $G_{i-1}$ has at least $2(k-i)+1$ components, then set $G_i = G_{i-1} - E[\alpha]$ and
go to Step $i+1$.
	\item If $w_1(v) \leq 2(k-i)$ for all $v \in V(G_{i-1})$ and every color class has at most $2(k-i)$ components,
 then set $G_i = G_{i-1} - x - y - E[\phi(xy)]$ for some edge $xy \in E(G_{i-1})$. 
\end{enumerate}
We will refer to these as options $(1)$, $(2)$, and $(3)$ for Step $i$.
We call the difference in the total weight of the remaining edges between $G_{i-1}$ and $G_i$ the \emph{weight of Step $i$} or $W_1(i)$.
When both options $(1)$ and $(2)$ are possible, we will pick option $(1)$.
Option $(3)$ is only used when neither of options $(1)$ and $(2)$ are possible.

Let $G_r$ be the last graph created by the algorithm, i.e., $r = k$ or $G_r$ has no edges.
We will first show by reversed induction on $i$ that
\begin{equation*}\label{j21}
\mbox{ $G_i$ has a rainbow matching of size at least $r-i$}.\tag{\texttt{II}}
\end{equation*}
This trivially holds for $i=r$.
Suppose (\ref{j21}) holds for some $i$, and $M_i$ is a rainbow matching of size $r-i$ in $G_i$. 
If we used Option $(1)$ in Step $i$, then there is some edge $e \in E(G_{i-1})$ incident with $v$
that is not incident with $M_i$ and whose color does not appear on the edges of $M_i$, similarly to the proof of Claim~\ref{cl:Delta}.
If we used Option $(2)$ in Step $i$, then there is some component of $E_{G_{i-1}}[\alpha]$
that is not incident with $M_i$, and we let $e$ be an edge of that component.
If we used Option $(3)$ in Step $i$, then let $e = xy$.
In each scenario, $M_i+e$ is a rainbow matching of size $r-i+1$ in $G_{i-1}$.
This proves the induction step and thus (\ref{j21}). So, if $r=k$, then we are done.

Assume $r< k$.
Then the algorithm stopped because $E(G_{r+1})=\emptyset$.
This means that
\begin{equation}\label{f1}
 \sum_{i=1}^rW_1(i)=\sum_{e\in E(G)}w_1(e)\ge2.5k^2.\tag{\texttt{III}}
\end{equation}
We will show that this is not the case.
Suppose that at Step $i$, we perform Option $(3)$.
By the bipartite nature of $G_0$, we may assume that $y \in L$.
By Claim \ref{cl:indeg}, $w_1(y) -w_1(xy)\leq \frac{k-1}{2}$.
Because Options $(1)$ and $(2)$ were not performed at Step $i$, $w_1(x) + w_1(E_{G_{i-1}}[\phi(xy)]) \leq 4(k-i)$.
Therefore the weight of Step $i$ is at most $\frac{k-1}{2} + 4(k - i)  < 4.5k-4i$. 

By Claims \ref{cl:class} and \ref{cl:Delta}, Option $(3)$ is performed at Step $1$. If $W_1(i)< 4.5k-4i$ for all
$i$, then $\sum_{i=1}^rW_1(i)< \sum_{i=1}^r 4.5k-4i=4.5kr-2r(r+1)\leq 2.5k(k-1)$, a contradiction to (\ref{f1}).
Let $i$ be the first time that $W_1(i) \geq 4.5k - 4i$, and $j<i$ be the last time Option $(3)$ is performed prior to $i$.
By the choice of $i$, $W_1(a)<4.5k-4a$  when $a \leq j$.
Because Option $(1)$ and $(2)$ were not chosen at Step $j$, $W_1(i')\le 2(k-j)$ for each Step $i'$ such that $i' > j$ and Option $(1)$ or $(2)$ is used.
Note that by choice of $i$ and $j$, this bound applies for all steps between $j+1$ and $i$.
Furthermore, by the choice of $i$, $2(k-j) > 4.5k - 4i'-1$ for $i' > i$.
It follows that $W_1(b)\leq 2(k-j)$ for each $b>j$, and so
$$\sum_{a=1}^rW_1(a)
\leq \sum_{a=1}^{j}(4.5k - 4a)+2(k-j)(r-j)\leq 4.5kj-2j(j+1)+2(k-j)(k-1-j)
$$
$$= k(0.5j+2k-2)<2.5k^2,
$$
a contradiction to (\ref{f1}).

\begin{case}
 $|C|\ge 1.75 k^2$.
\end{case}
We will use a different weighting:
For every vertex $v\in C$ and outgoing edge $vw$, if
 $vw\in E_0$, we let $w_2(vw)=1/\hat{d}^+(v)$, where $\hat{d}^+(v)$ is the color outdegree of $v$, and if
$vw$ is in a star $S\in \S$, then we let $w_2(vw)=1/(\hat{d}^+(v)\|S\|)$.
For a vertex $v\in V(G)$, let $w^+(v)$ and $w^-(v)$ denote the accumulated weights of the outgoing and
incoming edges, respectively, and $w_2(v)=w^+(v)+w^-(v)$. By definition, $w^+(v)=1$ for each $v\in C$.
Then
\[
\sum_{e\in E(G)}w_2(e)=\sum_{v\in V(G)}w^-(v)=\sum_{v\in V(G)}w^+(v)=|C|\ge 1.75k^2.
\]

\begin{claim} \label{cl:case2 edge weight}
Let $uv$ be a directed edge in $G$ and $e$ an edge incident to $u$ that is not $uv$.
Then $w_2(e) \leq 1/2$.
\end{claim}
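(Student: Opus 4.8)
The plan is to show that the denominator in the formula for $w_2(e)$ is always at least $2$. Note first that $e$ receives its $w_2$-weight from its tail, which therefore lies in $C$; so either $e$ is directed out of $u$, or $e$ is directed into $u$ and its tail is some $x\in C$. In each situation I will exhibit either a monochromatic star of size at least $2$ containing $e$ or a vertex of color outdegree at least $2$ that is the tail of $e$, and either of these forces $w_2(e)\le 1/2$.

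Suppose first that $e=ux$ is directed out of $u$. Since $uv$ is also directed out of $u$ and $x\ne v$, I split on whether $\phi(ux)=\phi(uv)$. If the colors agree, then neither edge lies in $E_0$, and by Claim~\ref{cl:starforest} the color class of $\phi(ux)$ is a star forest whose component through $u$ contains both $ux$ and $uv$; as the edges of a star in $\S$ are oriented away from its center while $ux$ and $uv$ both point away from $u$, that component is a star $S\in\S$ with center $u$, so $\|S\|\ge 2$ and $w_2(e)=1/(\hat d^+(u)\|S\|)\le 1/2$. If the colors differ, then $u$ sees at least two colors on its outgoing edges, so $\hat d^+(u)\ge 2$, and in either case $w_2(e)\le 1/\hat d^+(u)\le 1/2$.

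Next suppose that $e=xu$ is directed into $u$, with tail $x\in C$. If $xu$ lies in a star $S\in\S$, then $x$ must be the center of $S$ (its edges are oriented away from the center and $xu$ points away from $x$), so $\|S\|\ge 2$ and $w_2(e)\le 1/2$. The remaining --- and main --- case is $xu\in E_0$, where $w_2(e)=1/\hat d^+(x)$, so it suffices to prove $\hat d^+(x)\ge 2$. Assume $\hat d^+(x)=1$. Since $xu\in E_0$, no edge at $x$ other than $xu$ has color $\phi(xu)$, so $xu$ is the \emph{only} outgoing edge of $x$; likewise no outgoing edge of $u$ has color $\phi(xu)$, since such an edge together with $xu$ would give two edges of the same color at $u$. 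Now reverse $xu$ to $ux$: this is legitimate because $xu\in E_0$ and the orientation of $E_0$ was chosen freely, it lowers the color outdegree of $x$ from $1$ to $0$, and it raises that of $u$ from its current value $a$ to $a+1$, where $a\ge 1$ because $uv$ is an outgoing edge of $u$. Since $a+1\ge 2>1$, the sorted sequence of color outdegrees strictly increases lexicographically, contradicting the maximality of the chosen orientation. Hence $\hat d^+(x)\ge 2$ and $w_2(e)\le 1/2$.

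The only subtle step is the flip in the last paragraph: one must check that reversing a single $E_0$-edge genuinely moves the sorted multiset of color outdegrees up lexicographically. This is precisely the mechanism already used in Claim~\ref{cl:indeg} --- deleting a value $1$, inserting a $0$, and bumping some value $a\ge 1$ up to $a+1$ --- so I would simply reuse that reasoning. Everything else is routine bookkeeping with the definition of $w_2$.
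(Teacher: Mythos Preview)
Your proof is correct and follows essentially the same approach as the paper's: case on the direction of $e$, use the star size or $\hat d^+(u)\ge 2$ when $e$ points out of $u$, and when $e\in E_0$ points into $u$ from a vertex of color outdegree $1$, reverse it to contradict the lexicographic maximality of the orientation. The paper's write-up is terser---it first disposes of the case $e\in\S$ in one line and then handles only $e\in E_0$---but the substance is identical.
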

\begin{proof}
The result is easy if $e$ is in a monochromatic star with size at least $2$, so assume $e \in E_0$.
If $e$ is directed away from $u$, then $\hat{d}^+(u) \geq 2$ and the claim follows.
Suppose now that $e$ is directed towards $u$, say $e=wu$, and $w_2(e) = 1$. Then
$d^+(w)=1$, and reversing $e$ we obtain the orientation of $G$ where the outdegree of $w$
decreases from $1$ to $0$, and the outdegree of $u$ increases from $d^+(u) \geq 1$ to $d^+(u)+1$.
The new orientation has a  lexicographically larger outdegree sequence, which is a contradiction.
\end{proof}

\begin{claim}\label{cl:class2}
For every color  $\alpha$, we have $w_2(E[\alpha])\le 1.5(k-1)$.
\end{claim}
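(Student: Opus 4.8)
The plan is to express the weight $w_2(E[\alpha])$ in terms of the color out-degrees of the sources of the monochromatic components of color $\alpha$, and then to bound the number of those sources whose color out-degree equals $1$.

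First I would note that, by Claim~\ref{cl:starforest}, $E[\alpha]$ is a vertex-disjoint union of stars $X_1,\dots,X_p$; each $X_i$ is directed away from its center (its ``source'') $v_i$, and the $v_i$ are pairwise distinct. Summing the $w_2$-weights over a single component gives $w_2(X_i)=1/\hat d^+(v_i)$ in both cases: if $X_i\in\S$, each of the $\|X_i\|$ edges has weight $1/(\hat d^+(v_i)\|X_i\|)$, and if $X_i\in E_0$ the single edge has weight $1/\hat d^+(v_i)$. Hence
\[
w_2(E[\alpha])=\sum_{i=1}^{p}\frac{1}{\hat d^+(v_i)} .
\]
By Claim~\ref{cl:class}, $p\le 2(k-1)$. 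Let $B:=\{v_i:\hat d^+(v_i)=1\}$; every term with $\hat d^+(v_i)\ge 2$ is at most $\tfrac12$, so $w_2(E[\alpha])\le |B|+\tfrac12(p-|B|)\le (k-1)+\tfrac12|B|$, and it suffices to prove $|B|\le k-1$.

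Next I would record the structure of a vertex $v\in B$: its only out-color is $\alpha$, so if $\hat d(v)\ge k+1$ then Claim~\ref{cl:indeg} forces $d^-(v)=0$ and hence $\hat d(v)=\hat d^+(v)=1<k$, impossible; thus $\hat d(v)=k$. Since $\alpha$ is not an in-color of $v$, it follows from~(\ref{rainbow in-edges}) that $v$ has exactly $k-1$ in-edges, carrying $k-1$ distinct colors all different from $\alpha$, and (a tail of an in-edge has positive out-degree) these tails lie in $C$. Writing $X_v$ for the $\alpha$-component with source $v$, the stars $\{X_v:v\in B\}$ are pairwise vertex-disjoint, and no leaf of any $X_v$ lies in $B$ (a leaf has $\alpha$ as an in-color).

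Finally, to prove $|B|\le k-1$, suppose $|B|\ge k$, fix $B'\subseteq B$ with $|B'|=k$, and derive a rainbow matching of size $k$ (contradicting the minimality of $(G,\phi)$), similarly to the proofs of Claims~\ref{cl:class} and~\ref{cl:Delta}. Deleting $E[\alpha]$ lowers the minimum color degree by at most $1$ and leaves $n>4.25(k-1)^2$ vertices, so $G-E[\alpha]$ contains a rainbow matching of size $k-1$. To produce one of size $k$ one wants $k-1$ pairwise disjoint in-edges at vertices of $B'$, with $k-1$ distinct colors (none equal to $\alpha$), plus one $\alpha$-edge out of the single remaining star $X_{v_k}$; the color part follows from König's theorem applied to the incidence bipartite graph between $B'$ and the colors on its in-edges (each vertex of $B'$ has degree $k-1$ there, and a vertex cover of size $\le k-2$ is impossible, as it would leave a vertex of $B'$ all $k-1$ of whose colors lie in the cover), after which one checks the relevant tails avoid $B'$ and the center $v_k$ is free. \textbf{The main obstacle} is making this last step rigorous: the $k-1$ in-edges obtained this way are color-disjoint but not obviously vertex-disjoint, and the appended $\alpha$-edge must additionally avoid their tails. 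Reconciling the two constraints (distinct colors \emph{and} distinct tails) with only one vertex of $B'$ to spare, and guaranteeing a usable leaf of $X_{v_k}$, is where the real work lies; I expect it to require either a more careful simultaneous selection or an exchange/pruning argument exploiting that the tails lie in $C$ and that $n$ is large.
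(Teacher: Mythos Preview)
Your reduction to bounding $|B|:=\{v_i:\hat d^+(v_i)=1\}$ is correct up to that point, but the final step is where the approach stalls, as you yourself flag. The rainbow matching you try to assemble from in-edges at $B'$ need not be vertex-disjoint, and with only one spare vertex in $B'$ there is no evident exchange argument to fix this. Worse, $|B|\le k-1$ is strictly stronger than the claim: for instance $|B|=p=k$ would give $w_2(E[\alpha])=k\le 1.5(k-1)$ once $k\ge 3$, so you are aiming at a sharper target than necessary, which may simply be false in a minimal counterexample.

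The paper's proof is quite different and never counts $|B|$. It runs parallel to the proof of Claim~\ref{cl:class}: assume $w_2(E[\alpha])>1.5(k-1)$, delete $E[\alpha]$, and by minimality obtain a rainbow matching $M$ of size $k-1$. The key tool you never invoke is Claim~\ref{cl:case2 edge weight}. For each directed edge $vw\in M$, the $\alpha$-weight sitting at the \emph{tail} $v$ is at most $1/2$ (if $v$ is the center of its $\alpha$-component then $\hat d^+(v)\ge 2$ since $\phi(vw)\ne\alpha$; if $v$ is a leaf, Claim~\ref{cl:case2 edge weight} bounds the single $\alpha$-edge at $v$), while the $\alpha$-weight at the head $w$ is trivially at most $1$. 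Summing over $M$, at most $1.5(k-1)$ of the $\alpha$-weight meets $V(M)$, so some $\alpha$-edge is disjoint from $V(M)$ and extends $M$ to size $k$.

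In short: you do produce the size-$(k-1)$ matching from minimality but then abandon it to build a new one from scratch; the paper keeps that matching and uses the orientation via Claim~\ref{cl:case2 edge weight} to locate the extending $\alpha$-edge directly.
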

\begin{proof}
Otherwise, remove the edges of a color class $E[\alpha]$ with $w_2(E[\alpha])> 1.5(k-1)$,
and use induction to find a rainbow matching with $k-1$ edges in the remaining graph.
For every directed edge $vw\in M$, $v$ can be incident to a component of $E[\alpha]$ of weight at most $1/2$, and $w$
can be incident to a component of $E[\alpha]$ of weight at most $1$,  so there is at least one
component of $E[\alpha]$ not incident to the vertices of $M$, and we can pick any edge in this component to extend
$M$ to a rainbow matching of $k$ edges.
\end{proof}

We will use the following greedy algorithm: Start from $G$, and
at Step $i$, choose a color $\alpha$ with the minimum value $w_2(E[\alpha])>0$,
and pick any edge $e_i \in E[\alpha]$ of that color, and put it in the matching $M$, and then delete all edges of $G$
that are either incident to $e_i$ or have the same color as $e_i$.
Without loss of generality, we may assume that edge $e_i$ has color $i$.
If we can repeat the process $k$ times, we have found our desired rainbow matching, so assume that we run out of edges
after  $r<k$ steps, and call the matching we receive $M$.
Let $h\le k-1$ be the first step after which only edges with colors present in $M$ remain in $G_h$. Let $\beta$ be
a color not used in $M$ such that the last edges in $E[\beta]$ were deleted at Step $h$. Such $\beta$ exists, since
 $G$ has at least $k$ colors on its edges.

By Claim~\ref{cl:case2 edge weight}, one step can reduce the weight $w_2(E[\beta])$ by at most $1.5$.
It follows that $w_2(E[\beta])$ at Step $i\leq h$ is at most $1.5(h-i+1)$.
As we always pick the color with the smallest weight, the color $i\le h$ also had weight at most $1.5(h-i+1)$ when we deleted it in Step $i$.
Every color $i>h$ which appears in $M$ has weight at most $1.5(k-1)$ by Claim~\ref{cl:class2}.
Thus, the  total weight of colors in $M$ at the moment of their deletion is at most $1.5\sum_{i=1}^{h}{i} + 1.5(k-1)(k-1-h)$.

\begin{claim} \label{cl:case2 vertex weight}
For each vertex $v$, $w_2(v) \leq (k+1)/2$.
\end{claim}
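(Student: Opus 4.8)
The plan is to split $w_2(v)=w^+(v)+w^-(v)$ and bound the two parts separately. Recall $w^+(v)=1$ for $v\in C$ and $w^+(v)=0$ for $v\in L$, so everything reduces to controlling the weight $w^-(v)$ of the edges entering $v$. First I would note that an incoming edge at $v$ lying in a star from $\S$ has weight at most $1/2$ (since such a star has at least two edges), while an incoming edge $wv\in E_0$ has weight $1/\hat d^+(w)\le 1$, with equality iff $\hat d^+(w)=1$; and $\hat d^+(w)=1$ together with $wv\in E_0$ forces $wv$ to be the unique out-edge of $w$.

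Next I would collect two facts about the edges at $v$. By~(\ref{rainbow in-edges}) the in-edges at $v$ form a rainbow star, hence carry exactly $d^-(v)$ distinct colors, and if $d^-(v)\ge 1$ then $\hat d(v)=k$ by Claim~\ref{cl:indeg}. Moreover, by Claim~\ref{cl:starforest} and our convention of orienting each star away from its center, all edges of a fixed color at $v$ point the same way; in particular no color occurs on both an in-edge and an out-edge at $v$, so the $d^-(v)$ in-colors are disjoint from the $\hat d^+(v)$ out-colors and
\[
d^-(v)+\hat d^+(v)\ \le\ \hat d(v).
\]

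The heart of the proof is an orientation-reversal argument, and the delicate points are that the quantity we maximize is the sorted sequence of \emph{color} outdegrees and that the only edges we are free to re-orient are those of $E_0$. Suppose first $v\in C$, so $\hat d^+(v)\ge 1$. If some in-edge $wv$ had weight $1$, then $wv\in E_0$ and $\hat d^+(w)=1$; reversing $wv$ is admissible, it drops $\hat d^+(w)$ from $1$ to $0$, and it raises $\hat d^+(v)$ by $1$ (the color of $wv$ is new among the out-colors of $v$ because $wv\in E_0$). The sum of these two color outdegrees is unchanged while their maximum strictly increases, so the sorted color-outdegree sequence strictly increases lexicographically, a contradiction. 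Thus for $v\in C$ every in-edge has weight at most $1/2$; combining with the displayed inequality (and $\hat d(v)=k$ when $d^-(v)\ge 1$, $\hat d^+(v)\ge 1$) gives $d^-(v)\le k-1$, so $w^-(v)\le\tfrac12(k-1)$ and $w_2(v)=1+w^-(v)\le\tfrac{k+1}{2}$. If instead $d^-(v)=0$, then $w_2(v)=1\le\tfrac{k+1}{2}$ since $k\ge 3$.

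Finally, suppose $v\in L$; then $w^+(v)=0$ and $d^-(v)=d(v)=k$ by Claim~\ref{cl:indeg}. Here $v$ can have at most one incoming edge of weight $1$: if $w_1v$ and $w_2v$ both had weight $1$ they would lie in $E_0$ with distinct colors (the in-star at $v$ is rainbow) and $\hat d^+(w_1)=\hat d^+(w_2)=1$, and reversing both would drop two color outdegrees from $1$ to $0$ while raising $\hat d^+(v)$ from $0$ to $2$, again a lexicographic increase and a contradiction. Hence $w^-(v)\le 1+\tfrac12(k-1)=\tfrac{k+1}{2}$ and $w_2(v)=w^-(v)\le\tfrac{k+1}{2}$. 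I expect the reversal argument to be the main obstacle: one must be careful to track color outdegrees rather than ordinary outdegrees, and to check that every reoriented edge really lies in $E_0$ so that the new orientation is admissible; the observation that in-colors and out-colors at $v$ are disjoint is exactly what guarantees the color outdegree of $v$ goes up under the reversal.
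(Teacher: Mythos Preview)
Your proof is correct and follows essentially the same orientation-reversal approach as the paper. The paper's version is terser—it shows in one stroke (reversing two edges) that at most one edge incident with $v$ can have weight~$1$, citing Claim~\ref{cl:case2 edge weight} to force both such edges to be incoming and in $E_0$—while you separate the cases $v\in C$ and $v\in L$ and make the inequality $d^-(v)+\hat d^+(v)\le \hat d(v)$ explicit, a step the paper needs (to get $d^-(v)\le k-1$ when $v\in C$) but leaves implicit.
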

\begin{proof}
Suppose there are two edges, $e_1$ and $e_2$, incident with $v$ such that $w_2(e_1) = w_2(e_2) = 1$.
By Claim~\ref{cl:case2 edge weight}, both edges are directed towards $v$ and are in $E_0$.
Consider the orientation of $G$ where the directions of $e_1$ and $e_2$ have been reversed.
Then the outdegree of $v$ has been increased by $2$, while the outdegree of two other vertices changed from $1$ to $0$.
This creates a lexicographically larger outdegree sequence,  a contradiction.

By Claim~\ref{cl:indeg}, if $\hat{d}(v)\geq k+1$, then $w_2(v)=1$. If $\hat{d}(v)=k$, then by the above $w_2(v) \leq 1+(k-1)/2$.
\end{proof}

If an edge $e$ has a color $\beta$ not in $M$ or has color $i\leq h$ but was deleted at Step $j$ with $j<i$,
then $e$ is incident to the edges $\{e_1,\ldots e_h\}$.
By Claim~\ref{cl:case2 vertex weight}, the total weight of such edges  is at most $2h(k+1)/2 $.

However, this is a contradiction because it implies
\[ |C| \le h(k+1) + \frac32\sum_{i=1}^{h}{i} +\frac32(k-1)(k-1-h) = \frac{3k^2}2-3k+\frac32 + \frac{3h^2}4-\frac{hk}2 + \frac{13h}4 < 1.75 k^2.
\]

\begin{case}
$|L|> |\S^*|+0.5|E_0^*|$.
\end{case}
We will introduce yet another weighting, now  of vertices in $L$.
For every star $S\in \S^*$,   add a weight of $1/|L\cap V(S)|$ to every vertex in $L\cap V(S)$.
For every edge $e\in E_0^*$, add a weight of $1/2$ to the vertex in $L\cap e$.
For every $v\in L$, let $w_3(v)$ be the resulting weight of $v$.

Since $\sum_{v\in L}w_3(v)= |\S^*|+0.5|E_0^*|<|L|$,
there is a vertex $v\in L$ with $w_3(v)<1$.
Let $S_1,S_2,\ldots S_{k}$ be the $k$ maximal monochromatic stars incident to $v$
 ordered so that $|L\cap V(S_i)|\le  |L\cap V(S_j)|$ for $1\le i<j\le k$  (where $S_1\in E_0$ is allowed).
Since $v\notin C$, all these stars have different centers and different colors.
Now we  greedily construct a rainbow matching $M$ of size $k$, using one edge from each $S_i$ as follows.
Start from including into $M$ the edge in $S_1$ containing $v$.
Assume that for $\ell\ge 2$, we have picked a matching $M$ containing one edge from each $S_i$ for $1\le i\le \ell-1$.
Since $w_3(v)<1$, we know that $|L\cap V(S_\ell)|> \ell$ for $\ell\ge 2$.
As every edge in $M$ contains at most one vertex in $L$, we can extend the matching with
an edge from the center of $S_\ell$ to an unused vertex in $L\cap V(S_\ell)$.

\bigskip
To finish the proof, let us check that at least one of the above cases holds.
If Cases 1 and 2 do not hold, then $|C|<1.75k^2$ and $|\S^*|+0.5|E_0^*|< 2.5k^2$.
Then, since $n>4.25k^2$, $|L|>4.25k^2-1.75k^2=2.5k^2$, and we have Case 3.
\qed

\end{document}